\newtheorem{theorem}{Theorem}
\newtheorem{coro}[theorem]{Corollary}
\newtheorem{proposition}[theorem]{Proposition}
\newtheorem{rk}[theorem]{Remark}
\newcounter{other}            
\theoremstyle{definition}
\newtheorem{definition}[theorem]{Definition}
\numberwithin{equation}{section}
\begin{document}

\title[General fractional derivatives]{General fractional derivatives and the Bergman projection}

\author[Antti Per\"{a}l\"{a}]{Antti Per\"{a}l\"{a}}
\address{Antti Per\"{a}l\"{a} \\Departament de Matem\`{a}tiques i Inform\`{a}tica \\
Universitat de Barcelona\\
08007 Barcelona\\
Catalonia, Spain\\
Barcelona Graduate School of Mathematics (BGSMath).} \email{perala@ub.edu}


%
\subjclass[2010]{30H10, 32A36, 46A20, 42B35}

\keywords{Bergman space, Besov space, Bergman projection,  doubling weight, fractional derivative}

\thanks{The author acknowledges financial support from the Spanish Ministry of Economy and Competitiveness, through the Mar\'ia de Maeztu Programme for Units of Excellence in R\&D (MDM-2014-0445). The author was partially supported by the grant MTM2017-83499-P (Ministerio de Educaci\'on y Ciencia).}


\begin{abstract}
In this note we study some basic properties of general fractional derivatives induced by weighted Bergman kernels. As an application we demonstrate a method for generating pre-images of analytic functions under weighted Bergman projections. This approach is useful for proving the surjectivity of weighted Bergman projections in cases when the target space is not a subspace of the domain space (such situations arise often when dealing with Bloch and Besov spaces). We also discuss a fractional Littlewood-Paley formula.
\end{abstract}

\maketitle



\section{Introduction}

\noindent According to the classical Bergman reproducing formula, for a sufficiently nice analytic function $f$ defined on the unit disk $\mathbb{D}=\{|z|<1\}$, we have
$$f(z)=(\alpha+1)\int_{\mathbb{D}}\frac{f(\xi)(1-|\xi|^2)^\alpha dA(\xi)}{(1-z\overline{\xi})^{2+\alpha}},$$
where $\alpha>-1$ and $dA(\xi)=\pi^{-1}dx dy$ (for $\xi = x+iy$) is the normalized Lebesgue area measure. Differentiating this identity $N$ times gives us
$$f^{(N)}(z)=C(\alpha,N)\int_{\mathbb{D}}\frac{f(\xi)\overline{\xi}^N (1-|\xi|^2)^\alpha dA(\xi)}{(1-z\overline{\xi})^{2+\alpha+N}},$$
where $C(\alpha,N)$ is the corresponding normalizing constant. It is clear that for many purposes, the properties of $f^{(N)}$ are the same as the properties of
$$R^{\alpha,N}f(z)=\int_{\mathbb{D}}\frac{f(\xi) (1-|\xi|^2)^\alpha dA(\xi)}{(1-z\overline{\xi})^{2+\alpha+N}},$$
but getting rid of the factor $\overline{\xi}^N$ allows us to understand this formula even when $N$ is not an integer. These observations lead us to the fractional derivatives that were studied by Kehe Zhu in \cite{ZhuSmall}. See also \cite{ZZ} and \cite{ZhuBn}.

In this paper, we will study a related, more general, concept of fractional derivatives. Roughly the idea is that given two radial weights $\omega$ and $\nu$, there exists a unique mapping $R^{\omega,\nu}$ defined for all analytic functions, and transforming the Bergman kernel with respect to $\omega$ into the Bergman kernel with respect to $\nu$. In many ways, such operators possess the nice properties of the fractional derivatives defined by Zhu.

As an application the newly introduced concept, we show that it can be used to produce pre-images under the weighted Bergman projection $P_\omega$. This can be useful especially, when one wants to prove surjectivity of $P_\omega:X\to Y$, where $Y$ is not a subspace of $X$. In particular, this works when $X=L^\infty$ and $Y$ is the Bloch space, and $\omega$ satisfies a two-sided doubling condition. This surjectivity result has been obtained by Pel\'aez and R\"atty\"a in \cite{PR2018} by using a different method. We note that the present technique can be also used to prove a similar result when $X=L^p_{\lambda_\omega}$ and $Y$ is $p$-Besov space (see the corresponding section for definitions). In fact, for many uses it suffices to consider $\omega$ and $\omega_\alpha(z)=\omega(z)(1-|z|)^\alpha$, in which case
$R^{\omega,\omega_\alpha}$ can be understood as a fractional derivative of order $\alpha$. We hope and expect that over time these fractional derivatives will find other applications as well.\\

The paper is organized as follows. In Section 2, we introduce the framework of weight classes which we will use. Section 3 contains the definition and basic discussion on the fractional derivatives. Section 4 contains the proof of surjectivity for the Bloch case, and in Section 5, we will deal with the Besov case. Section 6 is the last part of the paper, and it contains some further remarks, most notably a fractional generalization of the classical Littlewood-Paley formula.

Throughout the paper, we write $a\lesssim b$ to indicate that there exists a constant $C>0$ with $a\leq Cb$. The relation $a\gtrsim b$ is defined in an analogous manner. Finally, if $a\lesssim b$ and $a \gtrsim b$, we will write $a\asymp b$.

\section{Weighted Bergman spaces}

A non-negative integrable function $\omega$ on the unit disk $\mathbb{D}$ is called a weight. We will be mostly interested in radial weights: $\omega(z)=\omega(|z|)$ for all $z \in \mathbb{D}$. For a radial weight $\omega$ we define
$$\widehat{\omega}(r)=\int_r^1 \omega(s)ds,\quad r \in [0,1)$$
and we assume that $\widehat{\omega}$ is non-zero for $0\leq r<1$, for otherwise much of the theory to follow would lead to trivialities. For convenience, we also agree that for $z \in \mathbb{D}$, we have $\widehat{\omega}(z):=\widehat{\omega}(|z|)$. We will focus mainly on the weight classes defined below.

\begin{definition}
A radial weight $\omega$ belongs to the class $\widehat{\mathcal{D}}$ if there exists $C_\omega>0$ so that
\begin{equation}\label{above}
\widehat{\omega}(r)\leq C_\omega \widehat{\omega}\left(\frac{1+r}{2}\right).
\end{equation}
Furthermore, $\omega$ belongs to $\widecheck{\mathcal{D}}$, if there exist $K_\omega>1$ and $C'_\omega>1$ so that
\begin{equation}\label{below}
\widehat{\omega}(r)\geq C'_\omega \widehat{\omega}\left(1-\frac{1-r}{K_\omega}\right).
\end{equation}
A radial weight $\omega$ belonging to both $\widehat{\mathcal{D}}$ and $\widecheck{\mathcal{D}}$ is called (two-sided) doubling, and we write $\omega \in \mathcal{D}$.
\end{definition}

More properties of the class $\widehat{\mathcal{D}}$ can be found in \cite{PR2016/1}. The class $\widecheck{\mathcal{D}}$ was introduced in the, at the moment unpublished paper \cite{PR2018}, but the results that are necessary for the present work can be found in \cite{PPR}.

We note that $\mathcal{D}$ contains the class $\mathcal{R}$ of regular weights. We say that a radial weight $\omega$ belongs to $\mathcal{R}$ if $$\frac{\widehat{\omega}(r)}{1-r}\asymp \omega(r).$$
Note that regular weights are not allowed to have zeroes, as is evident from the definition. The weights in $\mathcal{D}$ can have zeroes, but their zero sets cannot be hyperbolically very large near the boundary, see \cite{PPR}.\\

We denote by $\mathcal{H}$ the space of all analytic functions on $\mathbb{D}$. It is equipped with the topology of uniform convergence on compact sets. The weighted $L^p_\omega$ quasi-norm is given by
$$\|f\|_{p,\omega}=\left(\int_{\mathbb{D}}|f(\xi)|^p \omega(\xi)dA(\xi)\right)^{1/p},$$
The weighted Bergman space $A^p_\omega$ consists of analytic functions in $L^p_\omega$. If $\omega \in \widehat{\mathcal{D}}$, the space $A^p_\omega$ is also a quasi-Banach space. Moreover, $A^2_\omega$ is a Hilbert space with inner product
$$\langle f,g\rangle_{\omega}=\int_{\mathbb{D}}f(\xi)\overline{g(\xi)}\omega(\xi)dA(\xi).$$ Recall that a doubling weight $\omega$ induces a reproducing Bergman kernel $B_z^\omega \in A^2_\omega$ by the formula
$$f(z)=\int_{\mathbb{D}}f(\xi)\overline{B_z^\omega(\xi)}\omega(\xi)dA(\xi),\quad f \in A^2_\omega,$$
and the Bergman projection $P_\omega : L^2_\omega\to A^2_\omega$ is the integral operator induced by $B_z^\omega$;
$$P_\omega f(z)=\int_{\mathbb{D}}f(\xi)\overline{B_z^\omega(\xi)}\omega(\xi)dA(\xi),\quad f   \in L^2_\omega.$$

We remark that for $\omega \in \widehat{\mathcal{D}}$, the Bergman kernels $B_z^\omega$ are actually bounded analytic functions for every $z \in \mathbb{D}$. This is evident from the power series expansion
$$B_z^\omega(\xi)=\sum_{k=0}^\infty \frac{1}{\omega_k}(\overline{z}\xi)^k,$$
where $$\omega_k=2\int_0^1 r^{2k+1}\omega(r)dr$$ denotes the $2k+1$ moment of $\omega$. Therefore, the two integral formulas listed above make sense also under $L^1_\omega$ integrability assumption on $f$.

\section{General fractional derivatives}

Following the blueprint of Kehe Zhu \cite{ZhuSmall} (see also \cite{ZZ, ZhuBn}), we now define the general fractional differential operators on the disk. 

\begin{proposition}\label{basic}
Let $\omega$ and $\nu$ be doubling weights. There exists a unique linear operator $R^{\omega,\nu}:\mathcal{H}\to \mathcal{H}$ with the following three properties.
\begin{enumerate}
\item $R^{\omega,\nu}:H\to H$ is continuous;
\item $(R^{\omega,\nu}f)_r=R^{\omega,\nu}f_r$ for every $r\in (0,1)$;
\item $R^{\omega,\nu}B_z^\omega(\xi)=B_z^\nu(\xi)$.
\end{enumerate}
\end{proposition}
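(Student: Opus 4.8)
The plan is to define $R^{\omega,\nu}$ directly on Taylor coefficients and then verify the three properties and uniqueness by hand. For $f=\sum_{k\ge 0}a_k\xi^k\in\mathcal H$, set
$$R^{\omega,\nu}f(\xi)=\sum_{k=0}^\infty\frac{\omega_k}{\nu_k}\,a_k\xi^k.$$
Before anything else I would record the elementary fact that $\omega_k^{1/k}\to 1$ for every radial weight with $\widehat{\omega}>0$: from below $\omega_k\ge 2(1-\delta)^{2k+1}\widehat{\omega}(1-\delta)$ for each $\delta\in(0,1)$, and from above $\omega_k\le 2\widehat{\omega}(0)$. The same holds for $\nu$, so $(\omega_k/\nu_k)^{1/k}\to 1$; hence $R^{\omega,\nu}$ preserves radii of convergence and indeed maps $\mathcal H$ into $\mathcal H$. (If a quantitative version is wanted, the doubling hypotheses give two-sided polynomial control $c\,k^{-A}\le\omega_k\le C\,k^{-a}$ via the standard moment estimates, see \cite{PPR,PR2016/1}, which would serve equally well.)

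Next I would check the three properties. For (1), since $R^{\omega,\nu}$ is linear and $\mathcal H$ carries the Fr\'echet topology of uniform convergence on compacta, it suffices to dominate $R^{\omega,\nu}f$ on each disk $|\xi|\le r$ by a seminorm of $f$. Fixing $r<\rho<1$ and using the Cauchy estimates $|a_k|\le\rho^{-k}\max_{|\zeta|=\rho}|f(\zeta)|$, one gets
$$\max_{|\xi|\le r}\bigl|R^{\omega,\nu}f(\xi)\bigr|\le\Bigl(\sum_{k=0}^\infty\frac{\omega_k}{\nu_k}\Bigl(\tfrac r\rho\Bigr)^{k}\Bigr)\max_{|\zeta|=\rho}|f(\zeta)|,$$
and the series converges since $(\omega_k/\nu_k)^{1/k}\to 1<\rho/r$. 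Property (2) is immediate: if $f=\sum a_k\xi^k$ then $f_r=\sum a_kr^k\xi^k$, so $(R^{\omega,\nu}f)_r$ and $R^{\omega,\nu}(f_r)$ both have $k$-th Taylor coefficient $\tfrac{\omega_k}{\nu_k}a_kr^k$. For (3), fix $z\in\mathbb D$; the partial sums of $B_z^\omega(\xi)=\sum_k\omega_k^{-1}(\overline z\xi)^k$ converge to $B_z^\omega$ in $\mathcal H$ (again because $\omega_k^{-1/k}\to 1$, so in $\xi$ the radius of convergence is $1/|z|>1$), and hence by linearity and continuity
$$R^{\omega,\nu}B_z^\omega(\xi)=\sum_{k=0}^\infty\frac{\omega_k}{\nu_k}\cdot\frac1{\omega_k}(\overline z\xi)^k=\sum_{k=0}^\infty\frac1{\nu_k}(\overline z\xi)^k=B_z^\nu(\xi).$$

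It remains to prove uniqueness, for which properties (1) and (3) already suffice. Let $S:\mathcal H\to\mathcal H$ be continuous and linear with $SB_z^\omega=B_z^\nu$ for all $z\in\mathbb D$, and put $g_k:=S(\xi^k)\in\mathcal H$. Applying $S$ term by term to the $\mathcal H$-convergent expansion of $B_z^\omega$ and evaluating at an arbitrary $\eta\in\mathbb D$ gives, for every $z\in\mathbb D$,
$$\sum_{k=0}^\infty\frac{g_k(\eta)}{\omega_k}\,(\overline z)^{k}=(SB_z^\omega)(\eta)=B_z^\nu(\eta)=\sum_{k=0}^\infty\frac{\eta^k}{\nu_k}\,(\overline z)^{k}.$$
Both sides are power series in $\overline z$ converging on $\mathbb D$ with the same sum, so their coefficients coincide: $g_k(\eta)=\tfrac{\omega_k}{\nu_k}\eta^k$ for every $\eta$, i.e. $S(\xi^k)=\tfrac{\omega_k}{\nu_k}\xi^k=R^{\omega,\nu}(\xi^k)$. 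Thus $S$ and $R^{\omega,\nu}$ agree on all polynomials, and since Taylor sections are dense in $\mathcal H$ and both operators are continuous, $S=R^{\omega,\nu}$.

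The argument is essentially soft once the definition is in place; the only step needing care is the legitimacy of applying $R^{\omega,\nu}$ and $S$ term by term to the kernel expansion, which is precisely why one needs the sections $\sum_{k\le N}\omega_k^{-1}(\overline z\xi)^k$ to converge in the topology of $\mathcal H$ — and it is this, rather than the full strength of the doubling conditions, that makes the proof run. I expect that keeping straight which series converges in which sense will be the main thing to get right.
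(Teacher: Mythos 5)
Your argument is correct, but it is not the route the paper takes. The paper defines $R^{\omega,\nu}$ first only on dilatations $f_r$ (which lie in $A^2_\omega$), via the integral $\int_{\mathbb{D}}f_r(\xi)\overline{B_z^\nu(\xi)}\omega(\xi)\,dA(\xi)$, and then glues these definitions together by writing $z=rz'$ and setting $R^{\omega,\nu}f(z):=R^{\omega,\nu}f_r(z')$; uniqueness is deduced from properties (2) and (3) by approximating that reproducing integral by finite linear combinations of kernel functions. You instead define the operator globally as the coefficient multiplier $a_k\mapsto(\omega_k/\nu_k)a_k$ and make this legitimate by the root test $(\omega_k/\nu_k)^{1/k}\to 1$, with continuity from Cauchy estimates and uniqueness from properties (1) and (3) alone, by extracting $S(\xi^k)$ from the kernel identity and comparing power-series coefficients in $\overline z$. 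Both proofs are sound; the paper itself records the multiplier description afterwards and points to Proposition 1.14 of \cite{ZhuBn} as an alternative proof, which is essentially what you have carried out in detail. Your version buys two things: it makes explicit that the doubling hypothesis is irrelevant here (any radial weights with $\widehat\omega,\widehat\nu>0$ on $[0,1)$ suffice, since the elementary bounds $2(1-\delta)^{2k+1}\widehat\omega(1-\delta)\le\omega_k\le 2\widehat\omega(0)$ already give $\omega_k^{1/k}\to1$), and it shows that property (2) is redundant for uniqueness. What the paper's route buys is the integral representation \eqref{Rr} for dilatations essentially for free, which is the form actually used later (e.g.\ in identity \eqref{int} and in the surjectivity proofs), and it avoids any discussion of moments or radii of convergence. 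The one step in your write-up that deserves the care you flag is the termwise application of $S$ to the kernel expansion; your justification — that the sections $\sum_{k\le N}\omega_k^{-1}(\overline z\xi)^k$ converge to $B_z^\omega$ in $\mathcal H$ because the series in $\xi$ has radius of convergence $1/|z|>1$ — is exactly right, and the subsequent coefficient comparison is valid since a power series converging at every point of $\mathbb D$ is the Taylor series of its sum.
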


\begin{rk}
We remark that the present notation is not completely analogous with the notation $R^{s,t}$ in \cite{ZZ, ZhuBn}. If $\omega(z)=(\alpha+1)(1-|z|^2)^\alpha$ and $\nu(z)=(\alpha+t+1)(1-|z|^2)^{\alpha+t}$, then the operator $R^{\alpha,t}$ in the notation of \cite{ZZ, ZhuBn} is the operator $R^{\omega,\nu}$ in the present notation.
\end{rk}

\begin{proof}
The proof is the same as in \cite{ZhuSmall} apart from some minor modifications to the setting of general weights. We present the proof for the convenience of the reader.\\

\noindent {\bf Uniqueness:} By the reproducing formula, if $f \in \mathcal{H}$ and $r\in (0,1)$, we have
$$f_r(z)=\int_{\mathbb{D}}f_r(\xi)\overline{B_z^\omega(\xi)}\omega(\xi)dA(\xi).$$
From this, we obtain
\begin{equation}\label{Rr}
R^{\omega,\nu}f_r(z)=\int_{\mathbb{D}}f_r(\xi)\overline{B_z^\nu(\xi)}\omega(\xi)dA(\xi).
\end{equation}

Since this integral can be approximated by finite sums of the kernel functions, uniformly on compact sets, this clearly implies the uniqueness.\\

\noindent {\bf Existence:} For a function $f(z)=\sum f_k z^k$, which is analytic on a larger disk, we can obviously define
\begin{equation}\label{sum}
R^{\omega,\nu}f(z)=\int_{\mathbb{D}}f(\xi)\overline{B_z^\nu(\xi)}\omega(\xi)dA(\xi)=\sum_{k=0}^\infty \left(\frac{\omega_k}{\nu_k}\right)f_k z^k.
\end{equation}
From this it is easy to see that for such $f$, the condition (2) is satisfied. Also, by the reproducing formula, the condition (3) holds. For a general $f \in \mathcal{H}$ and $z \in \mathbb{D}$, we note that there exists $r \in (0,1)$ and $z' \in \mathbb{D}$ with $r=rz'$. We may then define
$$R^{\omega,\nu}f(z):=R^{\omega,\nu}f_r(z').$$
We remark that if $z=r_1z_1=r_2z_2$ with $r_1=rr_2$, ($z_1,z_2 \in \mathbb{D}$, $0<r_1<r_2<1$ and $r \in (0,1)$), then
$$R^{\omega,\nu}f_{r_1}(z_1)=R^{\omega,\nu}f_{rr_2}(z_1)=R^{\omega,\nu}f_{r_2}(rz_1)=R^{\omega,\nu}f_{r_2}(z_2),$$
since $rz_1=z_2$. We see that $R^{\omega,\nu}:\mathcal{H}\to \mathcal{H}$ is well-defined and satisfies (2). Also, it is clear that $R^{\omega,\nu}$ is linear; to see that $R^{\omega,\nu}f$ is analytic, just notice that any dilatation $(R^{\omega,\nu}f)_r$ clearly is.

We finally show the continuity. To this end, suppose that $f_k\to f$ in $\mathcal{H}$ (uniformly on compact sets). Then, for the dilatations $f_k(rz)\to f(rz)$ uniformly on $\mathbb{D}$, where $r \in (0,1)$. Using formula \eqref{sum} and (2), we see that
\begin{align*}
R^{\omega,\nu}f_k(rz)&=\int_{\mathbb{D}}f_k(\sqrt r\xi)\overline{B_{\sqrt r z}^\nu (\xi)}\omega(\xi)dA(\xi)\\
&\to \int_{\mathbb{D}}f(\sqrt r\xi)\overline{B_{\sqrt r z}^\nu (\xi)}\omega(\xi)dA(\xi)\\
&=R^{\omega,\nu}f(rz)
\end{align*}
uniformly on $z \in \mathbb{D}$. We get $R^{\omega,\nu}f_k\to R^{\omega,\nu}f$ uniformly on compact sets, since $r \in (0,1)$ is arbitrary. This finishes the proof of existence, and we are done.
\end{proof}

We remark that $R^{\omega,\nu}$ is actually a Taylor coefficient multiplier, mapping
$f(z)=\sum f_kz^k$ to
$$R^{\omega,\nu}f(z)=\sum_{k=0}^\infty \left(\frac{\omega_k}{\nu_k}\right)f_k z^k.$$
It can be shown that if $f$ is analytic on $\mathbb{D}$, then so is $R^{\omega,\nu}f$. An alternative method to prove Proposition \ref{basic} can be found in Proposition 1.14 of \cite{ZhuBn}.

By looking at the discussion in \cite{ZhuSmall}, it seems possible that this idea of fractional derivatives can be extended to much more general classes of even non-radial weights. However, the weights being radial guarantee that the kernel functions are bounded analytic functions. This makes it possible to adapt many results from the more standard cases with little analysis. Fractional derivatives induced by non-radial weights seem an intriguing topic, but they are not considered in this paper.

Note that if $f$ is an analytic function defined in terms on a finite Borel measure $\mu$ as
$$f(z)=\int_{\mathbb{D}}\overline{B_z^\omega(\xi)}d\mu(\xi),$$
then we have
$$R^{\omega,\nu}f(z)=\int_{\mathbb{D}}\overline{B_z^\nu(\xi)}d\mu(\xi).$$
In particular, if $f \in A^1_\omega$, then
\begin{equation}\label{int}
R^{\omega,\nu}f(z)=\int_{\mathbb{D}}f(\xi)\overline{B_z^\nu(\xi)}\omega(\xi)dA(\xi),
\end{equation}
or simply $R^{\omega,\nu}f(z)=\langle f,B_z^\nu\rangle_\omega$. When $f \notin A^1_\omega$ a nice integral formula can still often be obtained. For this purpose, let $\omega$ be a radial, integrable weight and set
$$\omega_{+}(r)=2\int_r^1 \omega(s)\frac{ds}{s}.$$
Further, we understand $\omega=\omega_{+0}$ and set $\omega_{+n}=(\omega_{+(n-1)})_+$. An easy calculation using Fubini's theorem reveals us 
\begin{equation}\label{plus}
(\omega_+)_n=4\int_0^1 r^{2n+1}\int_r^1 \frac{\omega(s)}{s}dsdr=4\int_0^1 \frac{\omega(s)}{s}\int_0^s r^{2n+1} drds=\frac{\omega_n}{n+1}.
\end{equation}

We can now present an analog of Proposition 5 in \cite{ZZ}. In a way, the following result is even stronger, as there is no polynomial "error" factor present. We remark that since the spaces covered in the present paper are very small in nature, using the following proposition will not be necessary. Nevertheless, it might be useful in future works.

\begin{proposition}
Let $\omega,\nu \in \widehat{\mathcal{D}}$. Then
$$R^{\omega,\nu}B_z^{\omega_{+N}}(\xi)=B_z^{\nu_{+N}},$$
for every $N \in \mathbb{N}$. Moreover, we have $R^{\omega,\nu}=R^{\omega_{+N},\nu_{+N}}$ for every $N$.
\end{proposition}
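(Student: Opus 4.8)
The plan is to read off both assertions from the moment identity \eqref{plus} together with the fact, recorded after the proof of Proposition~\ref{basic}, that $R^{\omega,\nu}$ is the Taylor coefficient multiplier sending $\sum_k f_k z^k$ to $\sum_k (\omega_k/\nu_k) f_k z^k$. First I would iterate \eqref{plus}: by an immediate induction on $N$ it gives
\[
(\omega_{+N})_k=\frac{\omega_k}{(k+1)^N}\qquad\text{and}\qquad(\nu_{+N})_k=\frac{\nu_k}{(k+1)^N}
\]
for every $k$ and every $N\in\mathbb N$. Since $\omega,\nu\in\widehat{\mathcal D}$, a standard consequence of \eqref{above} is that $\widehat\omega$ and $\widehat\nu$ decay at most polynomially, so $\omega_k^{1/k}\to 1$ and $\nu_k^{1/k}\to 1$; hence the power series defining $B_z^{\omega_{+N}}$ and $B_z^{\nu_{+N}}$ converge on $\mathbb D$ (in fact $\xi\mapsto B_z^{\omega_{+N}}(\xi)$ is analytic on $\{|\xi|<1/|z|\}$ for each fixed $z\in\mathbb D$), so these are genuine elements of $\mathcal H$. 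This is the only place where the hypothesis $\omega,\nu\in\widehat{\mathcal D}$ is actually used, and it is routine.

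For the displayed identity, fix $z\in\mathbb D$ and apply the coefficient multiplier description of $R^{\omega,\nu}$ in the variable $\xi$ to $B_z^{\omega_{+N}}(\xi)=\sum_{k\ge 0}(\omega_{+N})_k^{-1}(\overline z\xi)^k$; using the moment identity,
\[
R^{\omega,\nu}B_z^{\omega_{+N}}(\xi)=\sum_{k=0}^\infty\frac{\omega_k}{\nu_k}\,(\omega_{+N})_k^{-1}(\overline z\xi)^k=\sum_{k=0}^\infty\frac{\omega_k}{\nu_k}\,\frac{(k+1)^N}{\omega_k}\,(\overline z\xi)^k=\sum_{k=0}^\infty\frac{(\overline z\xi)^k}{(\nu_{+N})_k}=B_z^{\nu_{+N}}(\xi).
\]
If one prefers to stay strictly within formula \eqref{sum}, which is stated for functions analytic on a slightly larger disk, the computation is legitimate precisely because of the analyticity of $B_z^{\omega_{+N}}$ past $\overline{\mathbb D}$ noted above.

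For the ``moreover'' part, the cleanest route is to observe that $R^{\omega_{+N},\nu_{+N}}$ is likewise a coefficient multiplier, with symbol $(\omega_{+N})_k/(\nu_{+N})_k=\omega_k/\nu_k$ by the moment identity, i.e.\ the same symbol as $R^{\omega,\nu}$. Two linear maps $\mathcal H\to\mathcal H$ that are continuous for uniform convergence on compact sets and that agree on every monomial agree on all polynomials, hence, by density of polynomials in $\mathcal H$, on all of $\mathcal H$; therefore $R^{\omega,\nu}=R^{\omega_{+N},\nu_{+N}}$. Alternatively one may invoke Proposition~\ref{basic} directly: by the first part $R^{\omega,\nu}$ sends $B_z^{\omega_{+N}}$ to $B_z^{\nu_{+N}}$, it is continuous, and it commutes with dilations, so it satisfies the three characterizing properties of $R^{\omega_{+N},\nu_{+N}}$, and uniqueness does the rest. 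I do not expect a genuine obstacle: the whole argument is bookkeeping around \eqref{plus}, the only mildly delicate point being the verification that $B_z^{\omega_{+N}}$ and $B_z^{\nu_{+N}}$ are honest analytic functions, which follows from the size estimates for $\widehat{\mathcal D}$ weights.
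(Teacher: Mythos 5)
Your proof is correct and rests on the same two ingredients as the paper's: the Taylor-coefficient-multiplier description of $R^{\omega,\nu}$ and the iterated moment identity $(\omega_{+N})_k=\omega_k/(k+1)^N$ from \eqref{plus}. The paper merely packages the same computation as $R^{\omega,\nu}B_z^{\omega_{+N}}=R^{\omega,\nu}R^{\omega,\omega_{+N}}B_z^\omega=R^{\omega,\omega_{+N}}B_z^\nu$ and then invokes \eqref{plus}, so the two arguments are essentially identical, with yours spelling out the (routine but worthwhile) convergence of the kernel series.
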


\begin{proof}
Since both $R^{\omega,\nu}$ and $R^{\omega,\omega_{+N}}$ are Taylor coefficient multipliers, one obtains
$$R^{\omega,\nu}B_z^{\omega_{+N}}(\xi)=R^{\omega,\nu}R^{\omega,\omega_{+N}}B_z^\omega(\xi)=R^{\omega,\omega_{+N}}B_z^\nu(\xi).$$
The claim now follows by iterating \eqref{plus}.
\end{proof}

It follows that if there exists $N \in \mathbb{N}$ so that $f \in A^1_{\omega_{+N}}$, then
$$R^{\omega,\nu}f(z)=\int_{\mathbb{D}}f(\xi)\overline{B_z^{\nu_{+N}}(\xi)}\omega_{+N}(\xi)dA(\xi).$$
Such is the case, whenever $f$ belongs to any $L^1$ space induced by a standard weight.

We finally point out the obvious identities
\begin{align*}
R^{\omega,\nu}R^{\eta,\sigma}&=R^{\eta,\sigma}R^{\omega,\nu}\\
R^{\omega,\nu}R^{\eta,\omega}&=R^{\eta,\nu}\\
R^{\omega,\nu}R^{\nu,\omega}&=R^{\nu,\omega}R^{\omega,\nu}=I
\end{align*}

It is perhaps a good idea to keep in mind that $\nu$ being "smaller" than $\omega$ means roughly that the operator $R^{\omega,\nu}$ is of derivative type. When the roles are reversed, the operator should be understood as integration.

\section{Bloch space}

Recall that the Bloch space $\mathcal{B}$ consists of analytic functions $f:\mathbb{D}\to \mathbb{C}$ with
$$\|f\|_{\mathcal{B}}=\sup_{z \in \mathbb{D}}(1-|z|^2)|f'(z)|+|f(0)|<\infty.$$

In this section we show that the Bergman projection $P_\omega$ induced by $\omega \in \mathcal{D}$ is surjective from $L^\infty$ onto $\mathcal{B}$. The original proof of the surjectivity belongs to Pel\'aez and R\"atty\"a \cite{PR2018}. However, the proof presented here is simpler, and gives a way to construct several pre-images of a given Bloch function, and therefore of independent interest. Also, the proof of Theorem \ref{bloch} is imporant for understanding the next section of the paper. We remark that the boundedness of this operator is true even under the weaker assumption $\omega \in \widehat{\mathcal{D}}$, and it is easy to deduce from the kernel estimates in \cite{PR2016/1}.

For a radial weight $\omega$, let us denote 
\begin{align*}
\omega_\alpha(r)&=(1-r)^\alpha \omega(r);\\
\widetilde{\omega}(r)&=\frac{\widehat{\omega}(r)}{1-r}; \\
\omega^*(r)&=\int_{r}^1 \omega(s)s\log(s/r)ds.
\end{align*}
The associated weight $\omega^*$ arises from the Littlewood-Paley formula, which is valid for all radial $\omega$ and analytic functions $f,g \in A^2_\omega$.
\begin{equation}\label{lipa}
\int_{\mathbb{D}}f(\xi)\overline{g(\xi)}\omega(\xi)dA(\xi)=4\int_{\mathbb{D}}f'(\xi)\overline{g'(\xi)}\omega^*(\xi)dA(\xi)+\omega(\mathbb{D})f(0)\overline{g(0)}.
\end{equation}
The formula \eqref{lipa} can be found in \cite{PRMEM}, where is is presented in the case $f=g$. By orthogonality, the present formulation follows easily.

It is well-known that for any radial weight $\omega$, one has
$$\omega^*(z)\asymp \widehat{\omega}(z)(1-|z|),\quad |z|\to 1^-,$$
and that $\omega^*$ has a logarithmic singularity at the origin.

Moreover (see, for instance \cite{PPR}), if $\omega \in \mathcal{D}$, then there exist $a,b >0$ so that
$$r\mapsto \frac{\widehat{\omega}(r)}{(1-r)^a}$$ is essentially decreasing, and
$$r\mapsto \frac{\widehat{\omega}(r)}{(1-r)^b}$$ is essentially increasing. A function $\zeta:[0,1)\to [0,\infty)$ is essentially decreasing, if there exists $C>0$ so that for every $0\leq r\leq t<1$, one has $C\zeta(r)\geq \zeta(t)$. Being essentially increasing is defined in a similar manner.

The following properties of $\omega \in \mathcal{D}$ are known, and they can be obtained from the essential monotonicity results above.
\begin{align}
\widehat{\omega_\alpha}(z)&\asymp \widehat{\omega}(z)(1-|z|)^{\alpha},\quad \alpha\geq 0;\\
\widetilde{\omega}(z)&\in \mathcal{R};\\
\widehat{\widehat{\omega}}(z)&\asymp \widehat{\omega}(z)(1-|z|).
\end{align}

Norm estimates for the derivatives of Bergman kernels are crucial in the proof. We note that from the power series representation, it is easily read that for a radial $\omega$, we have
\begin{equation}\label{var}
\partial_{\overline{z}}^n B_z^\omega(\xi)=\left(\frac{\xi}{\overline{z}}\right)^n (B_z^\omega)^{(n)}(\xi).
\end{equation}
It follows easily that 
$$\|\partial_{\overline{z}}^n B_z^\omega\|_{A^p_\nu}\asymp \|(B_z^\omega)^{(n)}\|_{A^p_\nu}, \quad |z|\to 1^-$$ 
for every radial $\nu$ and $p \in (0,\infty)$.

We want to point out that, aside from the formula of an explicit pre-image, the following theorem is not new, as it is proven in \cite{PR2018}. In fact, this result was already mentioned in \cite{PR}. However, there it is noted that obtaining the pre-image can be laborious. The merit of the result below is that it shows that this is not that laborious after all! Pel\'aez and R\"atty\"a have been able to prove even the converse; the class $\mathcal{D}$ is precisely the class of radial weights, for which the theorem below holds.

\begin{theorem}\label{bloch}
Let $\omega \in \mathcal{D}$. Then the Bergman projection $P_\omega:L^\infty \to \mathcal{B}$ is bounded and onto. Moreover, if $\alpha>0$ and $h \in \mathcal{B}$, then
$$g_\alpha(z):= (1-|z|)^\alpha R^{\omega,\omega_\alpha}h(z)$$ belong to $L^\infty$ and $P_\omega(g_\alpha)=h$. 
\end{theorem}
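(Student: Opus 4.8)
The plan is to prove the boundedness of $P_\omega\colon L^\infty\to\mathcal B$ by the standard argument and then to establish the pre-image statement, which in particular yields surjectivity. For boundedness, differentiating under the integral sign gives $(P_\omega f)'(z)=\int_{\mathbb D}f(\xi)\overline{\partial_{\bar z}B_z^\omega(\xi)}\,\omega(\xi)\,dA(\xi)$ for $f\in L^\infty$, so that $(1-|z|^2)|(P_\omega f)'(z)|\le\|f\|_\infty(1-|z|^2)\int_{\mathbb D}|\partial_{\bar z}B_z^\omega(\xi)|\,\omega(\xi)\,dA(\xi)$; by \eqref{var} and the derivative kernel estimates of \cite{PR2016/1} the last integral is $\lesssim(1-|z|^2)^{-1}$, while $|P_\omega f(0)|\lesssim\|f\|_\infty$ is immediate. (Only $\omega\in\widehat{\mathcal D}$ is needed here.)

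Granting for the moment that $g_\alpha\in L^\infty$, I would verify $P_\omega(g_\alpha)=h$ from the series representation. Since $(1-|\xi|)^\alpha\omega(\xi)=\omega_\alpha(\xi)$, and $g_\alpha\in L^\infty$ forces the integrand below into $L^1(\mathbb D)$ (as $B_z^\omega$ is bounded and $\omega\in L^1$),
$$P_\omega(g_\alpha)(z)=\int_{\mathbb D}(R^{\omega,\omega_\alpha}h)(\xi)\,\overline{B_z^\omega(\xi)}\,\omega_\alpha(\xi)\,dA(\xi).$$
Writing $h(\xi)=\sum_k h_k\xi^k$, so that $(R^{\omega,\omega_\alpha}h)(\xi)=\sum_k(\omega_k/(\omega_\alpha)_k)h_k\xi^k$ by the Taylor coefficient multiplier description of $R^{\omega,\omega_\alpha}$ (cf. \eqref{sum}), and $\overline{B_z^\omega(\xi)}=\sum_j\omega_j^{-1}z^j\bar\xi^j$, I would integrate first over $\{|\xi|<\rho\}$, where both series converge uniformly, interchange sum and integral, use the orthogonality of $\xi^k\bar\xi^j$ on circles, note that $2\int_0^\rho r^{2k+1}\omega_\alpha(r)\,dr\uparrow(\omega_\alpha)_k$, and let $\rho\to1^-$; dominated convergence — on the integral side via the $L^1$ majorant, on the series side via $\sum_k|h_k||z|^k<\infty$ — collapses the right-hand side to $\sum_k h_k z^k=h(z)$.

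The core is the bound $\sup_z(1-|z|)^\alpha|R^{\omega,\omega_\alpha}h(z)|\lesssim\|h\|_{\mathcal B}$, i.e. $g_\alpha\in L^\infty$, and this is where $\omega\in\mathcal D$ (not merely $\widehat{\mathcal D}$) and the Littlewood--Paley formula enter. I would prove it first for the dilates $h_r$, which are bounded analytic functions: by \eqref{int}, $R^{\omega,\omega_\alpha}h_r(z)=\langle h_r,B_z^{\omega_\alpha}\rangle_\omega$, and since $h_r$ and $B_z^{\omega_\alpha}$ both lie in $A^2_\omega$, formula \eqref{lipa} gives
$$R^{\omega,\omega_\alpha}h_r(z)=4\int_{\mathbb D}h_r'(\xi)\,\overline{(B_z^{\omega_\alpha})'(\xi)}\,\omega^*(\xi)\,dA(\xi)+\omega(\mathbb D)\,h_r(0)\,\overline{B_z^{\omega_\alpha}(0)}.$$
The second term is $\lesssim\|h\|_{\mathcal B}$ because $B_z^{\omega_\alpha}(0)=1/(\omega_\alpha)_0$ is constant; for the first, $|h_r'(\xi)|\le\|h\|_{\mathcal B}(1-|\xi|^2)^{-1}$ uniformly in $r$, and $\omega^*(\xi)\asymp\widehat\omega(\xi)(1-|\xi|)$ near the boundary (with a harmless, $z$-independent logarithmic contribution near the origin, where $(B_z^{\omega_\alpha})'$ is bounded uniformly in $z$), so everything reduces to the kernel estimate $\int_{\mathbb D}|(B_z^{\omega_\alpha})'(\xi)|\,\widehat\omega(\xi)\,dA(\xi)\lesssim(1-|z|)^{-\alpha}$. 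Granting this, $|R^{\omega,\omega_\alpha}h_r(z)|\lesssim\|h\|_{\mathcal B}(1-|z|)^{-\alpha}$ uniformly in $r$; since $h_r\to h$ in $\mathcal H$ and $R^{\omega,\omega_\alpha}$ is continuous on $\mathcal H$ (Proposition \ref{basic}), letting $r\to1^-$ yields the claimed estimate.

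The main obstacle is this last kernel estimate. I would derive it from \cite{PR2016/1}: using \eqref{var} to replace $(B_z^{\omega_\alpha})'$ by $\partial_{\bar z}B_z^{\omega_\alpha}$ up to constants, the $A^1_\sigma$-norm estimates for derivatives of Bergman kernels there — applied with $\sigma=\widehat\omega$, which itself belongs to $\mathcal D$ since $\widehat{\widehat\omega}(z)\asymp\widehat\omega(z)(1-|z|)$ and $\omega\in\mathcal D$ — bound the integral by a quantity $\asymp\widehat{\widehat\omega}(z)/(\widehat{\omega_\alpha}(z)(1-|z|))$, which by the relations $\widehat{\widehat\omega}(z)\asymp\widehat\omega(z)(1-|z|)$ and $\widehat{\omega_\alpha}(z)\asymp\widehat\omega(z)(1-|z|)^\alpha$ equals $\asymp(1-|z|)^{-\alpha}$. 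Everything else — the Littlewood--Paley reduction, the dilation limiting argument, and the power-series computation for $P_\omega(g_\alpha)=h$ — is routine bookkeeping.
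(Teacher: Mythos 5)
Your proposal is correct and follows essentially the same route as the paper: the Littlewood--Paley formula converts $R^{\omega,\omega_\alpha}h$ into an integral against $h'$, the Bloch bound together with $\omega^*(\xi)\asymp\widehat\omega(\xi)(1-|\xi|)$ and the kernel estimates of \cite{PR2016/1} (plus $\widehat{\omega_\alpha}\asymp\widehat\omega(1-|\cdot|)^\alpha$ and $\widehat{\widehat\omega}\asymp\widehat\omega(1-|\cdot|)$) give $|R^{\omega,\omega_\alpha}h(z)|\lesssim\|h\|_{\mathcal B}(1-|z|)^{-\alpha}$, and then $P_\omega(g_\alpha)=h$ follows because $R^{\omega,\omega_\alpha}h\in A^1_{\omega_\alpha}$. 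Your only deviations are cosmetic: you work with dilates $h_r$ and pass to the limit (the paper applies \eqref{lipa} to $h$ directly after normalizing $h(0)=0$), and you verify $P_\omega(g_\alpha)=h$ by an explicit coefficient computation rather than by citing $R^{\omega_\alpha,\omega}R^{\omega,\omega_\alpha}=I$ via \eqref{int}.
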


\begin{proof}
The estimate $\|P_\omega(\Phi)\|_{\mathcal{B}}\lesssim \|\Phi\|_\infty$ follows from the kernel estimates in \cite{PR2016/1} and holds even under $\omega \in \widehat{\mathcal{D}}$.
We may assume that $h \in \mathcal{B}$ and $h(0)=0$. Let $\alpha>0$. Then both $h$ and $B_z^{\omega_\alpha}$ belong to $A^2_{\omega_\alpha}$, and we can use the Littlewood-Paley formula to obtain
\begin{align*}
R^{\omega,\omega_\alpha}h(z)&=4\int_{\mathbb{D}}h'(\xi)\overline{(B_z^{\omega_\alpha})'(\xi)}\omega^*(\xi)dA(\xi)\\
&=4\int_{\mathbb{D}}(1-|\xi|)h'(\xi)\overline{(B_z^{\omega_\alpha})'(\xi)}\frac{\omega^*(\xi)}{1-|\xi|}dA(\xi).
\end{align*}
Next, recall that there exists $R \in (0,1)$ so that $\omega^*(\xi)\lesssim \widehat{\omega}(\xi)(1-|\xi|)$ for $\xi\in \mathbb{D}\setminus D(0,R)$. On the other hand, if $\xi \in D(0,R)$, then $|(B_z^{\omega_\alpha})'(\xi)|\leq C(\omega_\alpha,R)$ for every $z \in \mathbb{D}$.

Together with Theorem 1 of \cite{PR2016/1} these observations yield
\begin{align*}
|R^{\omega,\omega_\alpha}h(z)|&\lesssim \|h\|_{\mathcal{B}}\int_{\mathbb{D}}|(B_z^{\omega_\alpha})'(\xi)|\widehat{\omega}(\xi)dA(\xi)\\
&\lesssim \|h\|_{\mathcal{B}} \int_0^{|z|}\frac{\widehat{\widehat{\omega}}(t)dt}{\widehat{\omega}(t)(1-t)^{\alpha}(1-t)^2}\\
&\lesssim \|h\|_{\mathcal{B}} \int_0^{|z|}\frac{dt}{(1-t)^{\alpha+1}}\\
&\lesssim \|h\|_{\mathcal{B}}\frac{1}{(1-|z|)^{\alpha}}, \quad |z|\to 1^-.
\end{align*}
In other words, the function $g_\alpha(z)=(1-|z|)^\alpha R^{\omega,\omega_{\alpha}}h(z)$ belongs to $L^\infty$, so that $R^{\omega,\omega_{\alpha}}h \in A^1_{\omega_\alpha}$. Therefore, by \eqref{int}, we finally note that 
$$P_\omega (g_\alpha)=R^{\omega_\alpha,\omega}R^{\omega,\omega_\alpha}h=h.$$
\end{proof}

The above theorem has a simple and natural corollary. Let $C$ denote the space of complex-valued functions that are continuous in the closed unit disk, and $C_0$ its subspace consisting of functions with zero boundary values. Recall also that the little Bloch space $\mathcal{B}_0$ is the subspace of $\mathcal{B}$ consisting of functions $f$ with
$$\lim_{|z|\to 1^-} (1-|z|^2)|f'(z)|=0.$$
We have the following.

\begin{coro}\label{small}
Let $\omega \in \mathcal{D}$. Then the Bergman projection $P_\omega:X \to \mathcal{B}_0$ is bounded and onto, where $X$ is either $C$ or $C_0$.
\end{coro}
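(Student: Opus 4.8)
The plan is to deduce everything from Theorem \ref{bloch}. Since $C$ and $C_0$ are contained in $L^\infty$, the estimate $\|P_\omega\Phi\|_{\mathcal{B}}\lesssim\|\Phi\|_\infty$ already gives boundedness of $P_\omega$ on both spaces; the only thing to verify is that the image lands in the closed subspace $\mathcal{B}_0\subset\mathcal{B}$. For $X=C$ this is a density argument: by Stone--Weierstrass the polynomials in $z$ and $\overline{z}$ are dense in $C$, and a one-line computation with the power series of $B_z^\omega$ shows that $P_\omega(\overline{\xi}^{k}\xi^{j})$ is an analytic monomial (a multiple of $z^{j-k}$ if $j\ge k$ and $0$ otherwise); analytic polynomials lie in $\mathcal{B}_0$, and since $\mathcal{B}_0$ is closed in $\mathcal{B}$ and $P_\omega$ is bounded, $P_\omega(C)\subseteq\mathcal{B}_0$. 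For $X=C_0$ one argues identically after observing that the functions $(1-|z|^2)q(z,\overline{z})$, with $q$ a polynomial in $z,\overline{z}$, are dense in $C_0$ (cut off $f\in C_0$ by a radial function equal to $1$ on a large disk and $0$ near $\mathbb{T}$; the result approximates $f$ uniformly and its quotient by $1-|z|^2$ is continuous on $\overline{\mathbb{D}}$, so apply Stone--Weierstrass), together with the fact that $P_\omega$ sends such a function to $P_\omega(q)-P_\omega(|z|^2q)$, again an analytic polynomial.

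For surjectivity, fix $h\in\mathcal{B}_0$ and put $h_0=h-h(0)$, so $h_0\in\mathcal{B}_0$ with $h_0(0)=0$. By Theorem \ref{bloch}, for any $\alpha>0$ the function $g_\alpha(z)=(1-|z|)^\alpha R^{\omega,\omega_\alpha}h_0(z)$ lies in $L^\infty$ and $P_\omega(g_\alpha)=h_0$. I claim $g_\alpha\in C_0$. Continuity on $\mathbb{D}$ is clear since $R^{\omega,\omega_\alpha}h_0\in\mathcal{H}$, so it remains to show $g_\alpha(z)\to0$ as $|z|\to1^-$. For this I would rerun the estimate in the proof of Theorem \ref{bloch}, splitting the Littlewood--Paley integral
$$R^{\omega,\omega_\alpha}h_0(z)=4\int_{\mathbb{D}}h_0'(\xi)\overline{(B_z^{\omega_\alpha})'(\xi)}\,\omega^*(\xi)\,dA(\xi)$$
at $|\xi|=r_0$: given $\eps>0$, pick $r_0\in(R,1)$, with $R$ as in that proof, so that $(1-|\xi|^2)|h_0'(\xi)|<\eps$ for $|\xi|\ge r_0$. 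On $|\xi|<r_0$ the kernel derivative is bounded by a constant depending only on $\omega_\alpha$ and $r_0$, while $h_0'$ and $\omega^*$ are bounded there, so that piece is $O(1)$ uniformly in $z$ and is killed by the factor $(1-|z|)^\alpha$; on $|\xi|\ge r_0$ one uses $|h_0'(\xi)|\le\eps/(1-|\xi|^2)$ and repeats verbatim the chain of inequalities from Theorem \ref{bloch} to get a bound $\lesssim\eps(1-|z|)^{-\alpha}$. Hence $\limsup_{|z|\to1^-}(1-|z|)^\alpha|R^{\omega,\omega_\alpha}h_0(z)|\lesssim\eps$, and as $\eps$ was arbitrary, $g_\alpha(z)\to0$; so $g_\alpha\in C_0\subseteq C$.

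It remains to assemble the pre-image. For $X=C$ take $g_\alpha+h(0)\in C$: since $P_\omega$ fixes constants, $P_\omega(g_\alpha+h(0))=h_0+h(0)=h$. For $X=C_0$ the constant $h(0)$ must be absorbed inside $C_0$, so I would first construct a single radial $\phi\in C_0$ with $P_\omega\phi=1$; because $\phi$ is radial, $P_\omega\phi$ reduces to its constant Taylor coefficient $\omega_0^{-1}\int_{\mathbb{D}}\phi\,\omega\,dA$, so it suffices to pick a continuous radial $\phi$ vanishing at $1$ with $\int_0^1\phi(r)\omega(r)r\,dr=\int_0^1\omega(r)r\,dr$, which is elementary. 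Then $g_\alpha+h(0)\phi\in C_0$ and $P_\omega(g_\alpha+h(0)\phi)=h$. The one genuinely non-routine point is the $o\bigl((1-|z|)^{-\alpha}\bigr)$ sharpening of the kernel estimate that promotes $g_\alpha$ from $L^\infty$ to $C_0$; the rest is density of polynomials and the already-established formal properties of $P_\omega$ and $R^{\omega,\omega_\alpha}$.
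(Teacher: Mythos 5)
Your proposal is correct, and the skeleton (deduce everything from Theorem \ref{bloch}, Stone--Weierstrass plus the closedness of $\mathcal{B}_0$ for boundedness, the explicit pre-image $g_\alpha=(1-|z|)^\alpha R^{\omega,\omega_\alpha}h$ for surjectivity) is the same as the paper's. The one genuinely different step is how you promote $g_\alpha$ from $L^\infty$ to $C_0$: the paper does this softly, observing that $h\mapsto g_\alpha$ is bounded from $\mathcal{B}$ to $L^\infty$, that $g_\alpha\in C_0$ when $h$ is a polynomial (since $R^{\omega,\omega_\alpha}$ is a coefficient multiplier), and then invoking density of polynomials in $\mathcal{B}_0$ together with the closedness of $C_0$ in $L^\infty$; you instead rerun the kernel estimate quantitatively, splitting the Littlewood--Paley integral at $|\xi|=r_0$ and using $(1-|\xi|^2)|h_0'(\xi)|<\eps$ outside. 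Your splitting is sound (the inner piece is $O(1)$ uniformly in $z$ even though $\omega^*$ has a logarithmic singularity at the origin, since it is integrable there, and the outer piece reproduces the paper's chain of inequalities with an extra factor $\eps$), and it buys an explicit rate of decay at the cost of more computation; the paper's argument is shorter but gives only membership. Two smaller remarks: your separate density argument for $P_\omega(C_0)\subseteq\mathcal{B}_0$ is redundant, since $C_0\subset C$ and you already have $P_\omega(C)\subseteq\mathcal{B}_0$ (the paper notes that boundedness need only be checked for $X=C$ and surjectivity only for $X=C_0$); and the construction of a radial $\phi\in C_0$ with $P_\omega\phi=1$, while correct, is unnecessary, because applying the pre-image formula directly to the constant $h(0)$ yields a constant multiple of $(1-|z|)^\alpha$, which already lies in $C_0$ -- so $g_\alpha$ built from $h$ itself (rather than from $h-h(0)$) is a $C_0$ pre-image.
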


\begin{proof}
Clearly, it suffices to prove the boundedness claim for $X=C$, and the onto part for $X=C_0$. Note first that by orthogonality, if $\Phi$ is a a trigonometric polynomial, then $P_\omega(\Phi)$ is a polynomial; in particular $P_\omega(\Phi)\in \mathcal{B}_0$. The first claim now follows from the norm estimate together with the Stone-Weierstrass theorem.

As for the onto part, we need to show that if $h \in \mathcal{B}_0$ and $\alpha>0$, then the function $g_\alpha(z)=(1-|z|)^\alpha R^{\omega,\omega_{\alpha}}h(z)$ belongs to $C_0$. By the proof of Theorem \ref{bloch}, we know that $h\mapsto g_\alpha$ is bounded $\mathcal{B}\to L^\infty$. If $h$ is a polynomial, then since $R^{\omega,\omega_\alpha}$ is a Taylor coefficient multiplier, also $R^{\omega,\omega_\alpha}h$ is a polynomial. It follows that in this case $g_\alpha \in C_0$. By continuity and completeness, the holds for the closure of polynomials, which is $\mathcal{B}_0$.
\end{proof}

We remark that the above method gives infinitely many pre-images. Such information might be useful for some other problems.

\section{Besov spaces}

Recall that the M\"obius invariant measure is given by 
$$d\lambda(z)=\frac{dA(z)}{(1-|z|^2)^2}.$$

For $1\leq p<\infty$ and $N \in \{2,3,\dots\}$, we define the Besov spaces $\mathcal{B}^p$ to consist of analytic functions $f$ with

$$\|f\|_{\mathcal{B}^p}=\left(\int_{\mathbb{D}}|f^{(N)}(z)|^p (1-|z|)^{Np}d\lambda(z)\right)^{1/p}+\sum_{j=0}^{N-1}|f^{(j)}(0)|<\infty.$$

It is well-known that for fixed $p$, any two choices of $N$ yield the same space, as long as $Np>1$ (choosing $N=1$ would exclude that case $p=1$, so we choose $N\geq 2$).

Denote by $L^p_{\lambda_\omega}$ the spaces of $p$-integrable functions with respect to the measure 
$$d\lambda_\omega(z)=\frac{\omega(z)dA(z)}{\widehat{\omega}(z)(1-|z|)}.$$

The following theorem will be the main result of this section.

\begin{theorem}\label{besov}
Let $\omega \in \mathcal{D}$ and $1\leq p\leq \infty$. Then the Bergman projection $P_\omega:L^p_{\lambda_\omega} \to \mathcal{B}^p$ is bounded and onto.
\end{theorem}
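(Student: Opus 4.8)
The plan is to mimic the structure of the proof of Theorem \ref{bloch}, splitting the statement into a boundedness part and a surjectivity part, and handling the endpoint cases $p=1$ and $p=\infty$ with some care. For boundedness, $P_\omega:L^p_{\lambda_\omega}\to\mathcal{B}^p$, I would use the characterization of $\mathcal{B}^p$ via the $N$-th derivative together with the kernel estimates for $\omega\in\widehat{\mathcal{D}}$ from \cite{PR2016/1}. Differentiating $P_\omega\Phi$ under the integral sign $N$ times produces, via \eqref{var}, a kernel of the form $\partial_{\overline z}^N B_z^\omega$, whose $A^1_{\widehat\omega}$-type norm is controlled by Theorem 1 of \cite{PR2016/1}; pairing against $\Phi\in L^p_{\lambda_\omega}$ and invoking a Schur-type test (with the measure $d\lambda_\omega$ on one side and $(1-|z|)^{Np}d\lambda(z)$ on the other) should give $\|P_\omega\Phi\|_{\mathcal{B}^p}\lesssim\|\Phi\|_{L^p_{\lambda_\omega}}$. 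The cases $p=1$ and $p=\infty$ are then either direct or follow by the same estimate with the integral/supremum interpreted appropriately; the case $p=\infty$ essentially recovers a weighted version of Theorem \ref{bloch}.

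For surjectivity, I would take $h\in\mathcal{B}^p$ with $h^{(j)}(0)=0$ for $0\le j\le N-1$, fix $\alpha>0$, and propose the explicit pre-image
$$g_\alpha(z):=\frac{(1-|z|)^\alpha}{(1-|z|)}\cdot\frac{\omega(z)}{\widehat{\omega}(z)}\Big/\omega(z)\cdot\text{(something)}$$
— more precisely, guided by the Bloch proof, the natural guess is $g_\alpha(z)=(1-|z|)^\alpha R^{\omega,\omega_\alpha}h(z)$ again, and one needs to check $g_\alpha\in L^p_{\lambda_\omega}$, i.e. that
$$\int_{\mathbb{D}}|R^{\omega,\omega_\alpha}h(z)|^p(1-|z|)^{\alpha p}\,\frac{\omega(z)\,dA(z)}{\widehat{\omega}(z)(1-|z|)}<\infty.$$
The key identity to exploit is that $R^{\omega,\omega_\alpha}$ commutes nicely with differentiation up to bounded factors, so that $(R^{\omega,\omega_\alpha}h)^{(N)}$ can be related to $R^{\omega,\omega_\alpha}$ applied to $h^{(N)}$, and one can then run a fractional Littlewood-Paley computation (anticipating Section 6) to convert the $L^p_{\lambda_\omega}$ norm of $g_\alpha$ into a weighted integral of $|h^{(N)}|^p$, which is exactly $\|h\|_{\mathcal{B}^p}$ up to constants. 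Once $g_\alpha\in L^p_{\lambda_\omega}$, the analyticity of $R^{\omega,\omega_\alpha}h$ forces $R^{\omega,\omega_\alpha}h\in A^1_{\omega_\alpha}$ (by the moment/kernel boundedness already noted), so \eqref{int} gives $P_\omega(g_\alpha)=R^{\omega_\alpha,\omega}R^{\omega,\omega_\alpha}h=h$, using the composition identities $R^{\omega_\alpha,\omega}R^{\omega,\omega_\alpha}=I$.

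The main obstacle I anticipate is the $L^p$ ($1<p<\infty$) version of the pointwise estimate that was so clean in Theorem \ref{bloch}: there the Bloch bound gave a pointwise bound $|R^{\omega,\omega_\alpha}h(z)|\lesssim\|h\|_{\mathcal B}(1-|z|)^{-\alpha}$, but here one does not have pointwise control of $h^{(N)}$ from $\|h\|_{\mathcal{B}^p}$, so the argument must be genuinely integral. This forces one to prove a Schur-test / Forelli–Rudin-type estimate adapted to the pair of weights $\widehat{\omega}(z)(1-|z|)$ and the doubling properties $\widehat{\omega_\alpha}(z)\asymp\widehat{\omega}(z)(1-|z|)^\alpha$, $\widehat{\widehat{\omega}}(z)\asymp\widehat{\omega}(z)(1-|z|)$ recorded above, together with the essential monotonicity of $\widehat{\omega}(r)/(1-r)^a$; getting the exponents to balance across the full range $1\le p\le\infty$ and checking the endpoints by hand is where the real work lies. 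A secondary technical point is making precise the commutation of $R^{\omega,\omega_\alpha}$ with $\partial^N$ up to comparable weights, which I would phrase as a lemma comparing $R^{\omega,\omega_\alpha}$ on $\mathbb{D}$ with a fractional-derivative normalization and then quote the $N=$ integer case together with \eqref{plus}-type moment identities.
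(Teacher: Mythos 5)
Your overall architecture is right: the same candidate pre-image $g_\alpha(z)=(1-|z|)^\alpha R^{\omega,\omega_\alpha}h(z)$, the reduction of boundedness to a kernel operator built from $\partial_{\overline z}^N B_z^\omega$ via \eqref{var}, and the final step $P_\omega(g_\alpha)=R^{\omega_\alpha,\omega}R^{\omega,\omega_\alpha}h=h$ via \eqref{int} all match the paper. But there are two places where your plan either stalls or takes a step that does not go through as stated. First, the claim that $R^{\omega,\omega_\alpha}$ ``commutes nicely with differentiation up to bounded factors,'' so that $(R^{\omega,\omega_\alpha}h)^{(N)}$ can be traded for $R^{\omega,\omega_\alpha}(h^{(N)})$, is not something you can assert: $R^{\omega,\omega_\alpha}$ is the coefficient multiplier $f_k\mapsto(\omega_k/(\omega_\alpha)_k)f_k$, while $\partial^N$ shifts the index by $N$, so the conjugated operator has multiplier sequence $\omega_{k+N}/(\omega_\alpha)_{k+N}$ and comparing the two sequences is itself a nontrivial estimate for general doubling weights. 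The paper avoids this entirely: it applies the ordinary Littlewood--Paley identity \eqref{lipa} twice to write $R^{\omega,\omega_\alpha}h(z)=16\int_{\mathbb{D}}h''(\xi)\overline{(B_z^{\omega_\alpha})''(\xi)}\,\omega^{*2}(\xi)\,dA(\xi)$, which puts the derivatives on $h$ and on the kernel simultaneously and leaves a concrete integral operator from $L^p_\lambda$ to $L^p_{\lambda_\omega}$ to be bounded. Relatedly, the exponent bookkeeping coming from $\omega^{*2}\asymp\widehat\omega(1-|\cdot|)^3$ is why the paper takes $\alpha>1$ rather than your $\alpha>0$.

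Second, you correctly identify the two-weight norm inequalities (for $T$ in the boundedness part and for the operator acting on $h''$ in the surjectivity part) as ``where the real work lies,'' but you leave them as a Schur/Forelli--Rudin test to be designed, and you explicitly worry that for $1<p<\infty$ the argument ``must be genuinely integral.'' The paper's key device is that it need not be: both operators are bounded at the endpoint $p=\infty$ by a single pointwise kernel estimate (Theorem 1 of \cite{PR2016/1} plus the monotonicity properties of $\widehat\omega$ for $\omega\in\mathcal{D}$), and at $p=1$ by dualizing to the adjoint acting on $L^\infty$, which is again a single pointwise kernel estimate; Riesz--Thorin interpolation then covers all $1<p<\infty$ with no Schur test and no $L^p$-specific analysis. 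Without this endpoint-plus-interpolation reduction (or a fully executed Schur test with an explicit auxiliary function adapted to $\widehat\omega$, which is harder to calibrate when $\omega$ may vanish on sets of positive measure), your outline does not yet constitute a proof.
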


\begin{proof}
We remark that the claimed result is equivalent to obtaining the boundedness of
$$T \Phi(z)=(1-|z|^2)^2\int_{\mathbb{D}}\Phi(\xi)\overline{(\partial_{\overline{z}}^2 B_z^\omega)(\xi)}\omega(\xi)dA(\xi)$$
acting $L^p_{\lambda_\omega}\to L^p_\lambda$. Furthermore, this result has already been established for the case $p=\infty$, which corresponds to $P_\omega:L^\infty \to \mathcal{B}$. Therefore, by the Riesz-Thorin complex interpolation theorem, we will only need to show the boundedness in the case $p=1$. However, by the dualities $(L^1_{\lambda})^* \sim L^\infty$ and $(L^1_{\lambda_\omega})^*\sim L^\infty$, this reduces to showing the boundedness of 
$$T^* \Phi(z)=\widehat{\omega}(z)(1-|z|)\int_{\mathbb{D}}\Phi(\xi)\overline{(B_z^\omega)''(\xi)}dA(\xi).$$
on $L^\infty$. But by Theorem 1 of \cite{PR2016/1}, we have
$$\int_{\mathbb{D}}|(B_z^\omega)''(\xi)|dA(\xi)\asymp \int_0^{|z|}\frac{(1-r)}{\widehat{\omega}(r)(1-r)^{3}}\lesssim \frac{1}{\widehat{\omega}(z)(1-|z|)}.$$

From this estimate, the claim regarding boundedness is immediate.\\

As for the surjectivity. Let $h \in \mathcal{B}^p$ and, without loss of generality, assume that $h(0)=h'(0)=h''(0)=0$. By the proof of Theorem \ref{bloch}, it suffices to show that if $h \in \mathcal{B}^p$, then the function $g_\alpha(z)=(1-|z|^2)^\alpha R^{\omega,\omega_\alpha}h(z)$ belongs to $L^p_{\lambda_\omega}$, whenever $\alpha>1$. By denoting $w^{*2}=(\omega^*)^*$, and using the Littlewood-Paley identity twice, we obtain
$$R^{\omega,\omega_\alpha}h(z)=16\int_{\mathbb{D}}h''(\xi)\overline{(B_z^{\omega_\alpha})''(\xi)}\omega^{*2}(\xi)dA(\xi).$$

Clearly, it will be enough to show the boundedness of $S$ acting $L^p_{\lambda}\to L^p_{\lambda_\omega}$, where
$$
S\Phi(z)=(1-|z|)^\alpha \int_{\mathbb{D}}\Phi(\xi)\overline{(B_z^{\omega_\alpha})''(\xi)}\frac{|\xi|\omega^{*2}(\xi)}{(1-|\xi|)^2}dA(\xi).$$
Here the factor $|\xi|$ can be added, since we assumed $h''(0)=0$. This amounts only to small detail in the proof and is really quite irrelevant.

First, we deal with $S:L^\infty\to L^\infty$. Note that since $\omega \in \mathcal{D}$, we have
$$|\xi|\omega^{*2}(\xi)\lesssim \widehat{\omega}(\xi)(1-|\xi|)^3.$$
Again, Theorem 1 of \cite{PR2016/1} gives
$$|S\Phi(z)|\lesssim \|\Phi\|_{\infty}(1-|z|)^\alpha \int_0^{|z|} \frac{\widehat{\omega}(r)(1-r)^2}{\widehat{\omega}(r)(1-r)^{3+\alpha}}dr\lesssim \|\Phi\|_\infty.$$
Next, we deal with $S:L^1_\lambda\to L^1_{\lambda_\omega}$. By duality, it will be enough to study
$$S^*\Phi(z)=|z|\omega^{*2}(z)\int_{\mathbb{D}}\Phi(\xi)\overline{(\partial_{\overline{z}}^2B_z^{\omega_\alpha})(\xi)}\frac{\omega(\xi)(1-|\xi|)^\alpha}{\widehat{\omega}(\xi)(1-|\xi|)}dA(\xi)$$
acting of $L^\infty$. Note that $\omega^{*2}$ is not bounded, but we are saved by the factor $|z|$ and
$$|z|\omega^{*2}(z)\lesssim \widehat{\omega}(z)(1-|z|)^3.$$
Note that by Lemma 4 in \cite{PPR}, since $\alpha>0$, we have
$$\int_r^1 \frac{\omega(s)(1-s)^{\alpha-1}}{\widehat{\omega}(s)}ds\asymp (1-r)^{\alpha-1}.$$

Another application of Theorem 1 in \cite{PPR} finally gives us
$$|S^*\Phi(z)|\lesssim \|\Phi\|_\infty \widehat{\omega}(z)(1-|z|)^3 \int_0^{|z|}\frac{(1-r)^{\alpha-1}}{\widehat{\omega}(r)(1-r)^{3+\alpha}}dr\lesssim \|\Phi\|_\infty.$$

The proof is completed by an application of the Riesz-Thorin interpolation theorem as done before.
\end{proof}

We point of that if $\omega \in \mathcal{R}$, then $L^p_\lambda=L^p_{\lambda_\omega}$ and the proof is more standard. However, when $\omega \in \mathcal{D}$, it might have singularities and zeroes inside the disk. In general, neither $L^p_\lambda \subset L^p_{\lambda_\omega}$, nor $L^p_{\lambda_\omega}\subset L^p_\lambda$ holds, and for $f \in L^p_\lambda$, the integral
$$P_\omega f(z)=\int_{\mathbb{D}}f(\xi)\overline{B_z^\omega(\xi)}\omega(\xi)dA(\xi)$$

might not be well-defined. Also, to the author's knowledge the proof for surjectivity in \cite{PR2018} does not, in an obvious way, carry over to the Besov space setting.

\section{Final remarks}

We note that if $\omega \in \mathcal{R}$, it is possible to extract the pre-image $h$ of an analytic $f$ under $P_\omega$ by writing
$$h(z)=f(0)+\frac{\widehat{\omega}(z)}{|z|\omega(z)}(2zf'(z)+f(z)-f(0)).$$
If, for instance, $f$ is Bloch function, it is not difficult to see that $h$ will be bounded. However, if $\omega \in \mathcal{D}$, the $h$ above might very well fail to be bounded; $\omega$ might be zero on a set of positive measure, for instance. The author thinks that the fractional derivatives studied here are rather convenient for this purpose.

It is also clear that the norm of $P_\omega : L^\infty \to \mathcal{B}$ equals $$\sup_{z\in \mathbb{D}} \|\partial_{\overline{z}}B_z^\omega\|_{A^1_\omega}.$$
If $\omega=1$, this number equals $8/\pi$, see \cite{Per, Per2}.\\

It is possible to use \eqref{plus} to obtain a fractional Littlewood-Paley identity. By calculating with power series, one sees
\begin{equation}\label{litpal}
\int_{\mathbb{D}}f(z)\overline{g(z)}\omega(z)dA(z)=\int_{\mathbb{D}}R^{\eta,\eta_{+N}}f(z)\overline{R^{\nu,\nu_{+M}}g(z)}\omega_{+N+M}(z)dA(z).
\end{equation}
Here $\omega$ and $\eta$ can be chosen freely, but in practice it might be advantageous to choose them small enough so that the fractional derivatives become integral operators.

Note that by \eqref{lipa}
$$(|\cdot|^2\omega)_n=\int_{\mathbb{D}}|z|^{2n+2}\omega(z)dA(z)=4(n+1)^2\int_{\mathbb{D}}|z|^{2n}\omega^*(z)dA(z)=4(n+1)^2\omega^*_n,$$
so $4\omega^*=(|\cdot|^2 \omega)_{+2}$. In a way the advantage of \eqref{litpal} is that $N+M$ need not be even. Noting that 
$$R^{\omega,\omega_+}f(z)=(zf)'=f(z)+zf'(z),$$
it is possible to easily characterize the Bloch, little Bloch and Besov spaces in terms of these derivatives. Then formula \eqref{litpal} can be used to obtain a perhaps little bit cleaner proof of the Theorems \ref{bloch} and \ref{besov} as well the Corollary \ref{small}. The details are left to the reader. Notice also that \eqref{lipa} is a consequence of \eqref{litpal} by the following reasoning. Given an analytic function $h$, we set $h_0=(h-h(0))/z$, which is also analytic. Notice that $R^{\eta,\eta_+}h_0=h'$. By \eqref{litpal}, we have
$$\int_{\mathbb{D}}f_0(z)\overline{g_0(z)}|z|^2\omega(z)dA(z)=4\int_{\mathbb{D}}f'(z)\overline{g'(z)}\omega^*(z)dA(z).$$
By analyzing the left-hand-side above, we obtain \eqref{lipa}.\\


\end{document}